\def\On{\mbox{On}}

\def\mm{\mathfrak{M}}

\documentclass{article}
\usepackage{amsthm} 
\usepackage{graphicx} 
\usepackage{amssymb,latexsym,times}
\newtheorem{theorem}{Theorem}

\newtheorem{lemma}[theorem]{Lemma}
\newtheorem{proposition}[theorem]{Proposition}%

\begin{document}
%\linenumbers

\title{An extension of a theorem of Zermelo\footnote{I am indebted to  John Steel and Philip Welch for helpful discussions concerning this paper.}}
\author{Jouko V\"a\"an\"anen\\
Department of Mathematics and Statistics,\\ University of Helsinki\\
\and
Institute for Logic, Language and Computation,\\ University of Amsterdam}
%\large
\maketitle

\def\FUN{\mathrm{FUN}}
\def\ISM{\mathrm{ISM}}
\def\dom{\mathrm{dom}}
\def\ran{\mathrm{ran}}
\def\mm{\mathcal{M}}
\def\tr{\mathrm{tr}}
\def\TC{\mathrm{TC}}

\begin{abstract}
We show that if  $(M,\in_1,\in_2)$ satisfies the first order Zermelo-Fraenkel axioms of set theory when  the membership relation is $\in_1$ and also when the membership relation is $\in_2$, and in both cases the formulas are allowed to contain both $\in_1$ and $\in_2$, then $(M,\in_1)\cong (M,\in_2)$, and the isomorphism is definable in $(M,\in_1,\in_2)$. This extends Zermelo's 1930 theorem in \cite{zbMATH02562682}.
\end{abstract}

Zermelo \cite{zbMATH02562682} proved that if $(M,\in_1)$ and $(M,\in_2)$ both satisfy the second order Zermelo-Fraenkel axioms in which the Separation Schema and the Replacement Sche\-ma of $ZFC$ are replaced by single second order axioms, then $(M,\in_1)\cong(M,\in_2)$. We extend this as follows: Let us consider the vocabulary $\{\in_1,\in_2\}$, where both $\in_1$ and $\in_2$ are binary predicate symbols. Let $ZFC(\in_1)$ denote the {\em first order} Zermelo-Fraenkel axioms of set theory when $\in_1$ is the membership relation but formulas are allowed to contain $\in_2$ too. Similarly, in $ZFC(\in_2)$  the membership relation is $\in_2$  but formulas are allowed to contain $\in_1$ too.
We prove the following theorem:

\begin{theorem}\label{maintheorem}
If $(M,\in_1,\in_2)\models ZFC(\in_1)\cup ZFC(\in_2)$, then $(M,\in_1)\cong(M,\in_2)$ via a definable class function.
\end{theorem}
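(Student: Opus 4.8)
The plan is to build the isomorphism $F\colon(M,\in_1)\to(M,\in_2)$ by $\in_1$-recursion, setting $F(x)$ to be the $\in_2$-set whose $\in_2$-elements are exactly $\{F(y):y\in_1 x\}$, and symmetrically to define $G$ by $\in_2$-recursion. Granting totality of both, $F$ is an $\in_1$-$\in_2$ homomorphism, injective by $\in_1$-induction together with $\in_2$-extensionality, and a second induction shows $F$ and $G$ are mutually inverse, so $F$ is the desired definable class isomorphism. Everything thus reduces to proving that these recursions are \emph{total}, and this is precisely where the hypothesis is spent: that Separation and Replacement for each membership relation may mention the other.

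\textbf{First step (mutual well-foundedness).} I would first show that $(M,\in_1)\models$ ``$\in_2$ is well-founded'', i.e.\ every nonempty $\in_1$-set has an $\in_2$-minimal element (and symmetrically). If not, $\in_1$-choice yields an $\in_1$-sequence $\langle x_n\rangle$ with $x_{n+1}\in_2 x_n$. The trick is that all $x_n$ lie inside the \emph{single} $\in_2$-set $T=\mathrm{tc}_2(\{x_0\}_2)$, so combined $\in_2$-Separation forms the $\in_2$-set $\{z:z\in_2 T\wedge z\in_1\ran(\langle x_n\rangle)\}=\{x_n:n\}$, a nonempty $\in_2$-set with no $\in_2$-minimal element, contradicting $\in_2$-Foundation. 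Bounding an $\in_1$-definable descending chain inside an $\in_2$-transitive closure and then extracting it by combined Separation is the crucial move, and it gives that each relation is well-founded from the standpoint of the other.

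\textbf{Main obstacle (boundedness of ranks).} Totality of $F$ at $x$ says $\{F(y):y\in_1 x\}$ is an $\in_2$-set, which — again using combined $\in_2$-Separation inside an $\in_2$-rank-initial segment — is equivalent to the $\in_2$-ranks $\{\rho_2(F(y)):y\in_1 x\}$ being $\in_2$-bounded. So the heart of the matter is, namely \emph{(B)}: every $\in_1$-set of $\in_2$-ordinals is $\in_2$-bounded (and symmetrically). This is the hard part, and the genuinely subtle point, since (B) in one direction appears formally equivalent to (B) in the other and the well-foundedness trick does \emph{not} apply (a cofinal collection does not sit inside one $\in_2$-set). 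I would attack (B) by a minimal-counterexample analysis: an $\in_1$-set that is $\in_2$-cofinal in $\mathrm{On}$ forces $F$, restricted to the $\in_1$-ordinals, to be an order-isomorphism of an $\in_1$-ordinal $\alpha_0$ onto a downward-closed cofinal --- hence all --- of $(\mathrm{On},\in_2)$; applying combined $\in_2$-Replacement to this isomorphism restricted to each $\in_2$-set $\delta$ shows that every proper initial segment of $\alpha_0$ is $\in_2$-coded, and promoting this to the whole segment being an $\in_2$-set makes $\mathrm{On}$ an $\in_2$-set, a Burali--Forti contradiction. The delicate step, which I expect to be the main difficulty, is exactly that promotion --- ruling out that an $\in_1$-set ordinal can be isomorphic to the whole proper class $(\mathrm{On},\in_2)$ --- and I anticipate resolving it by taking $\alpha_0$ minimal, forcing $\in_1$-regularity, and playing the two well-foundednesses against one another.

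Once (B) is in hand, totality of $F$ and of $G$ follows, and surjectivity of $F$ comes either from totality of $G$ or by excluding the ``proper initial segment'' alternative directly: were $\ran(F)$ a proper $\in_2$-rank-initial segment, $F^{-1}$ would be a combined-definable bijection from an $\in_2$-set onto all of $M$, so $\in_2$-Replacement would collapse the entire $\in_2$-universe to a set, which is impossible. This final contradiction --- a definable bijection between a set and the whole universe --- is the first-order substitute for the cardinality argument underlying Zermelo's original dichotomy.
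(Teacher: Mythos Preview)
Your plan is sound in outline and parallels the paper's, though you organize it by direct $\in_1$-recursion while the paper works through a formula $\phi(x,y)$ asserting the existence of a local $\in_1$-coded isomorphism from $\TC_1(\{x\})$ onto $\TC_2(\{y\})$. Both approaches reduce everything to the ordinal comparison you call (B), so you have correctly located the crux. Your mutual-well-foundedness lemma is correct as stated, but it does not do the work you hope for at the critical point: knowing that $\in_2$ is well-founded from the $\in_1$-side does not by itself prevent an $\in_1$-set ordinal from being order-isomorphic to the whole proper class $(\On,\in_2)$, and in fact the paper neither proves nor uses mutual well-foundedness. There is also a slip in your reduction: a failure of (B) gives you an $\in_1$-set of $\in_2$-ordinals that is $\in_2$-cofinal, but such a set has no a priori connection to $F$ restricted to the $\in_1$-ordinals, so the sentence ``forces $F$, restricted to the $\in_1$-ordinals, to be an order-isomorphism of an $\in_1$-ordinal $\alpha_0$ onto all of $(\On,\in_2)$'' is a non sequitur as written.

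The genuine gap is your ``delicate step'': promoting ``every proper initial segment of $\alpha_0$ is an $\in_2$-set'' to ``$\alpha_0$ is an $\in_2$-set'' is exactly what has to be argued, and ``playing the two well-foundednesses against one another'' is not an argument. The paper resolves this with a three-way case split on the ordinal correspondence. If neither direction is total on ordinals, the $\in_1$-least $\alpha$ with no image and the $\in_2$-least $y$ with no preimage match up and satisfy $\phi(\alpha,y)$, an immediate contradiction. If every $\in_1$-ordinal has an image but some $\in_2$-ordinal $y$ (taken $\in_2$-least) has no preimage, then every $z\in_2 V^2_y$ \emph{does} have a preimage (via the induced isomorphism $V^1_\alpha\cong V^2_{f(\alpha)}$), and combined $\in_2$-Replacement collects all these preimages into a single $\in_2$-set $c$. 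Now choose $\alpha$ with $c\in_1 V^1_\alpha$; its image must lie strictly below $y$, so the image of $c$ lies in $V^2_y$ and therefore has a preimage $b\in_2 c$; uniqueness forces $b=c$, whence $c\in_2 c$, contradicting $\in_2$-Foundation. The remaining case is symmetric. This Replacement-then-$c\in c$ punchline is the missing idea that replaces your unproved promotion step; once you have it, the rest of your outline goes through.
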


The result of Zermelo readily follows from our theorem. The important difference between our result and Zermelo's result is that our theories $ZFC(\in_1)$ and $ZFC(\in_2)$ are {\em first order} theories. It is important that we allow in these axiom systems formulas from the extended vocabulary $\{\in_1,\in_2\}$. Without this the result would be blatantly false as there are countable non-isomorphic models of $ZFC$, assuming there are models of $ZFC$ at all. Since the isomorphism in Theorem~\ref{maintheorem} is definable, the result can be seen as a provable theorem of the first order theory $ZFC(\in_1)\cup ZFC(\in_2)$.

Theorem~\ref{maintheorem} resembles the categoricity conclusion for set theory in \cite[page 18]{Martin}. There are two main differences: First, the axiomatization of set theory in \cite{Martin} is informal, based on the Axiom of Extensionality and an informal full Comprehension Axiom, while our result is completely formal and in the context of ZFC. Secondly, it is assumed in \cite{Martin} that the two $\in$-relations give rise to the same (informal) structure of the ordinals, owing to the uniqueness of the ordinal concept. We do not make this assumption but rather {\em prove} that the two $\in$-relations have isomorphic ordinals. Martin's work has been extended to class theory in \cite{welch}. Theorem~\ref{maintheorem} was stated without proof in \cite[page 104]{Vaananen2012-VNNSOL}.  

We call our theorem an {\em internal categoricity} result because it shows that one cannot have in one and the same domain two non-isomorphic membership-relations $\in_1$ and $\in_2$ if these relations can ``talk'' about each other. 

Our theorem is a strong robustness result for set theory. Essentially, the model cannot be changed ``internally''. To get a non-isomorphic model one has to go ``outside'' the model. Such robustness is important for set theory because set theory is already the ``outside'' of mathematics, the framework where mathematics is (or can be) built. 

How are the numerous independence results in harmony with this internal categoricity? Let us take the Continuum Hypothesis CH as an example. CH is independent of $ZFC$ in the sense that both $ZFC\cup\{CH\}$ and $ZFC\cup\{\neg CH\}$ are consistent, if $ZFC$ itself is. Internal categoricity means simply that if $(M,\in_1)$ satisfies $CH$ and $(M,\in_2)$ satisfies $\neg CH$, then either $(M,\in_1)$ or $(M,\in_2)$ does not satisfy the Separation Schema or the Replacement Schema if formulas are allowed to mention the other membership-relation.  Such models cannot be internal to each other in the sense discussed.

In the below proof we will work in $ZFC(\in_1)\cup ZFC(\in_2)$ but in fact operate all the time in either $\in_1$-set theory or in $\in_2$-set theory. We have to keep the two set theories separate even though they also interact via the Separation and Replacement Schemas in the joint vocabulary $\{\in_1,\in_2\}$.

Let $\tr_i(x)$ be the formula $\forall t\in_i x\forall w\in_i t(w\in_1 x)$. Let $\TC_i(x)$ be the  unique $u$ such that $\tr_i(u)\wedge \forall v\in_ix(v\in_iu)\wedge\forall v((\tr_i(v)\wedge \forall w\in_ix(w\in_iv))\to\forall w\in_iu(w\in_i v))$ (``$u$ is the $\in_i$-transitive closure of $x$''). When we write $\TC_i(\{x\})$, we mean by $\{x\}$ the singleton $\{x\}$ in the sense of $\in_i$. Let $\phi(x,y)$ be the formula $\exists f\psi(x,y,f)$, where $\psi(x,y,f)$ is the conjunction of the following formulas (where  $f(t)$, $f(w)$ and $f(x)$ are in the sense of $\in_1$):

\begin{description}
%\item[(1)] $\TC(x,u)$
%\item[(2)] $\TC(y,v)$
\item [(i)] In the sense of $\in_1$, the set $f$ is a function with $\TC_1(\{x\})$ as its domain.
\item [(ii)] $\forall t\in_1 \TC_1(x)(f(t)\in_2 \TC_2(y))$ 
\item [(iii)] $\forall t\in_2 \TC_2(y)\exists w\in_1 \TC_1(x)(t=f(w))$
\item [(iv)] $\forall t\in_1 \TC_1(x)\forall w\in_1 \TC_1(\{x\})(t\in_1w\leftrightarrow f(t)\in_2 f(w))$
\item [(v)] $f(x)=y$ %(Here $f(x)$ is in the sense of $\in_1$).
\end{description} 

%\begin{lemma} If $\psi(x,y,u,v,f)$, then $$\forall z(z\in_2 y\leftrightarrow \exists w(w\in_1 x\wedge f(w)=z)).$$
%\end{lemma}

%\begin{proof}
%\end{proof}

We prove a sequence of lemmas about the formulas $\phi(x,y)$ and $\psi(x,y,f)$:

\begin{lemma} If $\psi(x,y,f)$ and $\psi(x,y,f')$, then  $f=f'$. 
\end{lemma}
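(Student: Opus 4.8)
The plan is to prove $f=f'$ by well-founded $\in_1$-induction on the common $\in_1$-domain $\TC_1(\{x\})$ guaranteed by clause (i). Working inside $\in_1$-set theory, where $\in_1$ is well-founded by Foundation, suppose toward a contradiction that the $\in_1$-set $\{t\in_1\TC_1(\{x\}):f(t)\neq f'(t)\}$ — available by the Separation Schema of $ZFC(\in_1)$ — is nonempty, and let $t_0$ be an $\in_1$-minimal element of it. Since $\TC_1(\{x\})$ is $\in_1$-transitive, every $w\in_1 t_0$ again lies in the domain, so minimality gives the induction hypothesis $f(w)=f'(w)$ for all $w\in_1 t_0$.

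The key step is to show that the $\in_2$-elements of $f(t_0)$ are exactly $\{f(w):w\in_1 t_0\}$, and symmetrically for $f'$. First, every $\in_2$-element $s$ of $f(t_0)$ lies in $\TC_2(y)$: if $t_0=x$ then $f(t_0)=y$ by (v) and $s\in_2 y$, while $y\subseteq_2\TC_2(y)$ by the defining clause of $\TC_2$; otherwise $t_0\in_1\TC_1(x)$, so $f(t_0)\in_2\TC_2(y)$ by (ii), and the $\in_2$-transitivity of $\TC_2(y)$ forces $s\in_2\TC_2(y)$. By the surjectivity clause (iii), $s=f(w)$ for some $w\in_1\TC_1(x)$, and then (iv) yields $w\in_1 t_0$ from $f(w)\in_2 f(t_0)$. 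Conversely, if $w\in_1 t_0$ then $w\in_1\TC_1(x)$ (using $\in_1$-transitivity of $\TC_1(x)$, or that the $\in_1$-elements of $x$ lie in $\TC_1(x)$ when $t_0=x$), so (iv) gives $f(w)\in_2 f(t_0)$. This establishes the claim, and the identical argument applies to $f'$.

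Applying the induction hypothesis, $\{f(w):w\in_1 t_0\}=\{f'(w):w\in_1 t_0\}$, so $f(t_0)$ and $f'(t_0)$ have precisely the same $\in_2$-elements. By Extensionality in the sense of $\in_2$ we conclude $f(t_0)=f'(t_0)$, contradicting the choice of $t_0$. Hence the counterexample set is empty and $f=f'$.

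I expect the main obstacle to be bookkeeping rather than deep content: one must keep the two membership relations and the two transitive closures strictly separate, verify in each case that \emph{all} $\in_2$-elements of the image are captured by the surjectivity clause (iii) — which relies on the $\in_2$-transitivity of $\TC_2(y)$ together with the case split at $t_0=x$ — and confirm that the quantifier ranges in (ii)–(iv), stated variously in terms of $\TC_1(x)$ and $\TC_1(\{x\})$, line up with the particular $t_0$ and its $\in_1$-members under consideration.
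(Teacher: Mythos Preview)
Your proof is correct and follows essentially the same $\in_1$-induction argument as the paper's: both establish that $f$ and $f'$ agree on an arbitrary $w\in_1\TC_1(\{x\})$ by using (iii) and (iv) to identify the $\in_2$-elements of $f(w)$ (respectively $f'(w)$) with the $f$-images (respectively $f'$-images) of the $\in_1$-elements of $w$, then invoke the induction hypothesis and $\in_2$-Extensionality. Your version is merely more explicit about the minimal-counterexample framing and the case split at $t_0=x$ needed to land in $\TC_2(y)$ before applying (iii), which the paper glosses with ``Clearly, $t\in_2\TC_2(\{y\})$.''
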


\begin{proof}
To prove $f=f'$ assume  $w\in_1 \TC_1(\{x\})$. We show $f(w)=f'(w)$. W.l.o.g. $f(s)=f'(s)$ for $s\in_1 w$. Suppose $t\in_2 f(w)$. Clearly, $t\in_2 \TC_2(\{y\})$. By (iii), $t=f(s)$ for some $s\in_1 \TC_1(\{x\})$. By (iv), $s\in_1 w$. By (iv) again, $f'(s)\in_2 f'(w)$. By assumption, $f(s)=f'(s)$. Hence $t\in_2 f'(w)$. Thus $\forall t(t\in_2 f(w)\to t\in_2 f'(w))$. By symmetry, $f(w)=f'(w)$.  
\end{proof}

\begin{lemma}\label{fds}\label{fdsy} Suppose $\psi(x,y,f)$. If $x'\in_1x$, then $\phi(x',f(x'))$. 
 If  $y'\in_2y$, then there is $x'\in_1x$ such that $f(x')=y'$ and $\phi(x',y')$. 
\end{lemma}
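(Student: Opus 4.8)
The plan is to treat the two clauses separately, the first carrying the main weight and the second reducing to it. For the first clause the witness for $\phi(x',f(x'))$ will be the $\in_1$-restriction $f'=f\restriction\TC_1(\{x'\})$. I would first record the $\in_1$-containment $\TC_1(\{x'\})\subseteq\TC_1(\{x\})$: from $x'\in_1 x$ we get $x'\in_1\TC_1(\{x\})$, and since $\TC_1(\{x\})$ is $\in_1$-transitive this yields the containment. Hence $f'$ is a genuine set by $\in_1$-Separation, being the restriction of the $\in_1$-function $f$ to a subset of its domain. Clauses (i), (iv) and (v) for $\psi(x',f(x'),f')$ are then immediate: (i) because $f'$ is by construction a function with domain $\TC_1(\{x'\})$; (iv) because it is just the restriction of the corresponding biconditional for $f$, all relevant arguments already lying in $\TC_1(\{x\})$; and (v) because $x'\in_1\TC_1(\{x'\})$, so $f'(x')=f(x')$.

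The substance is clauses (ii) and (iii), which together assert that $f$ carries $\TC_1(x')$ bijectively onto $\TC_2(f(x'))$ --- that is, the isomorphism commutes with transitive closure. For (ii) I would prove by $\in_1$-induction on $w\in_1\TC_1(\{x\})$ the statement ``$t\in_1\TC_1(w)\to f(t)\in_2\TC_2(f(w))$'', using the recursion $\TC_1(w)=w\cup\bigcup_{z\in_1 w}\TC_1(z)$: if $t\in_1 w$ then (iv) gives $f(t)\in_2 f(w)$ (its domain conditions hold, since $w\in_1\TC_1(\{x\})$ and $t\in_1 w$ together place $t$ in $\TC_1(x)$); if $t\in_1\TC_1(z)$ for some $z\in_1 w$, the induction hypothesis gives $f(t)\in_2\TC_2(f(z))$ while (iv) gives $f(z)\in_2 f(w)$, and $\in_2$-transitivity of $\TC_2(f(w))$ closes the case. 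Instantiating $w=x'$ gives (ii). For (iii) I would first note that $f$ is injective on $\TC_1(\{x\})$ --- if $f(a)=f(b)$ and some $c\in_1 a$ has $c\notin_1 b$, then (iv) yields $f(c)\in_2 f(a)=f(b)$ and hence $c\in_1 b$, a contradiction --- and surjective onto $\TC_2(\{y\})$ by (iii) and (v); so $f^{-1}$ is an $\in_2$-to-$\in_1$ isomorphism, and the symmetric $\in_2$-induction gives $s\in_2\TC_2(f(x'))\to f^{-1}(s)\in_1\TC_1(x')$, which is exactly (iii). This completes $\psi(x',f(x'),f')$ and hence $\phi(x',f(x'))$.

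For the second clause, let $y'\in_2 y$, so $y'\in_2\TC_2(y)$; clause (iii) for $f$ then supplies $w\in_1\TC_1(x)$ with $f(w)=y'$. To upgrade $w\in_1\TC_1(x)$ to $w\in_1 x$ I would apply (iv) with $w$ in the $\TC_1(x)$-slot and $x$ in the $\TC_1(\{x\})$-slot together with (v): since $f(w)=y'\in_2 y=f(x)$, the biconditional forces $w\in_1 x$. Taking $x'=w$ gives $f(x')=y'$, and then $\phi(x',y')=\phi(x',f(x'))$ is precisely the first clause.

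I expect the real difficulty to be the crux, namely that $f$ respects transitive closures. The two inclusions have to be run as inductions on opposite membership relations, so one must ensure that $\in_i$-Foundation and the recursive description of $\TC_i$ are available inside the two-sorted model, and that every auxiliary object --- $f'$ itself and the sets over which one inducts --- is produced by the appropriate $\in_i$-Separation applied to a formula of the joint vocabulary $\{\in_1,\in_2\}$. Keeping the domains $\TC_i(x)$ and $\TC_i(\{x\})$ rigorously apart throughout is the bookkeeping that most easily goes wrong.
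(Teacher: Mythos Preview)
Your proposal is correct and follows the paper's approach exactly: take $f'=f\restriction\TC_1(\{x'\})$ as the witness for $\phi(x',f(x'))$, and obtain the second clause from the first via clauses (iii)--(v). The paper compresses the verification of $\psi(x',f(x'),f')$ into the single word ``Clearly'', whereas you unpack the nontrivial part---that $f$ carries $\TC_1(x')$ onto $\TC_2(f(x'))$---by the pair of $\in_1$- and $\in_2$-inductions; this is precisely the content the paper is suppressing, and your argument for it is sound.
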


\begin{proof}
Let $y'=f(x')$ and $f'=f\restriction \TC_1(\{x'\})$. Clearly now $\psi(x',y',f')$. Hence $\phi(x',f(x'))$.
The other claim is proved similarly. \end{proof}

\begin{lemma}\label{L5}\label{L6}
If $\phi(x,y)$ and $\phi(x,y')$, then $y=y'$. If $\phi(x,y)$ and $\phi(x',y)$, then $x=x'$.
\end{lemma}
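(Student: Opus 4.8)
The plan is to prove both statements by $\in$-induction, in each case reducing an equality of two $\phi$-values to the equality of their members and then invoking Lemma~\ref{fds} together with extensionality. I would prove the first statement by $\in_1$-induction on $x$ and the second by $\in_2$-induction on $y$; the two arguments are mirror images of one another, obtained by interchanging the roles of $\in_1$ and $\in_2$.

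For the first statement I would fix $f,f'$ with $\psi(x,y,f)$ and $\psi(x,y',f')$ and argue by $\in_1$-induction on $x$, the induction hypothesis being that $\phi$ is single-valued at every $x'\in_1 x$. By $\in_2$-extensionality it suffices to show that $y$ and $y'$ have the same $\in_2$-members, so let $t\in_2 y$. Applying Lemma~\ref{fds} to $\psi(x,y,f)$ gives some $x'\in_1 x$ with $f(x')=t$ and $\phi(x',t)$, and applying Lemma~\ref{fds} to $\psi(x,y',f')$ and the same $x'\in_1 x$ gives $\phi(x',f'(x'))$. The induction hypothesis at $x'$ then forces $t=f'(x')$. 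Finally clause (iv) for $f'$, with the two quantified variables instantiated to $x'$ and $x$ (note $x'\in_1\TC_1(x)$ because $x'\in_1 x$, and $x\in_1\TC_1(\{x\})$), yields $f'(x')\in_2 f'(x)=y'$, so $t\in_2 y'$. The symmetric argument shows every $\in_2$-member of $y'$ is an $\in_2$-member of $y$, whence $y=y'$.

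For the second statement I would fix $f,f'$ with $\psi(x,y,f)$ and $\psi(x',y,f')$ and argue by $\in_2$-induction on $y$, the induction hypothesis being that the first argument of $\phi$ is determined by the second at every $t\in_2 y$. By $\in_1$-extensionality it suffices to show $x$ and $x'$ have the same $\in_1$-members, so let $s\in_1 x$. Lemma~\ref{fds} applied to $\psi(x,y,f)$ gives $\phi(s,f(s))$, and clause (iv) for $f$ gives $f(s)\in_2 f(x)=y$; put $t=f(s)$. Applying Lemma~\ref{fds} to $\psi(x',y,f')$ and $t\in_2 y$ produces $s'\in_1 x'$ with $f'(s')=t$ and $\phi(s',t)$. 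The induction hypothesis applied to $t$ then forces $s=s'\in_1 x'$. By symmetry $x$ and $x'$ have the same $\in_1$-members, so $x=x'$.

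The point that will require the most care is the legitimacy of these two inductions rather than any computation inside them. Both $\phi$ and $\psi$ mention $\in_1$ and $\in_2$ simultaneously---they quantify over an $\in_1$-function $f$ while comparing $\in_1$- and $\in_2$-membership, and the operators $\TC_i$ and $\{\cdot\}$ are themselves $\in_i$-definable---so the single-valuedness and injectivity predicates over which I induct are genuine formulas of the full vocabulary $\{\in_1,\in_2\}$. The $\in_1$-induction driving the first statement is therefore an instance of the Foundation (or $\in_1$-induction) schema of $ZFC(\in_1)$ applied to a formula containing $\in_2$, and dually for the second statement; this is precisely the place where the assumption that the schemas of $ZFC(\in_i)$ range over the extended vocabulary is indispensable. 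Granting this, both inductions go through and I expect no further obstacle.
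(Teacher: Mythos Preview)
Your proof is correct and follows essentially the same approach as the paper: both argue the first claim by $\in_1$-induction on $x$ and the second by $\in_2$-induction on $y$, using Lemma~\ref{fds} to pass between members of $x$ (resp.\ $y$) and members of $y,y'$ (resp.\ $x,x'$), and then invoking the induction hypothesis together with extensionality. Your explicit remark that these inductions are instances of the Foundation schema of $ZFC(\in_i)$ applied to formulas in the full vocabulary $\{\in_1,\in_2\}$ is a welcome clarification that the paper leaves implicit.
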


\begin{proof} We may assume the claim holds for all $\in_1$-elements of $x$.
Suppose $\psi(x,y,f)$ and $\psi(x,y',f')$.
We prove $y=y'$. Let $s\in_2 y$. By Lemma~\ref{fdsy} there is $t\in_1 x$ such that $f(t)=s$ and $\phi(t,s)$. By (iv), $s\in_1 x$. Let $s'=f'(t)$. By (iv),  $s'\in_2 y'$. By Lemma~\ref{fds} again, $\phi(t,s')$. By the Induction Hypothesis, $s=s'$. We have proved $\forall s(s\in_2 y\to s\in_2 y')$. The converse follows from symmetry. Now to the second claim. We may assume the claim holds for all $\in_2$-elements of $y$.
Suppose $\psi(x,y,f)$ and $\psi(x',y,f')$. 
We prove $x=x'$. Let $s\in_1 x$. Thus $f(s)\in_2 y$. There is $s'\in_1 \TC_1(\{x'\})$ such that $f'(s')=f(s)$. Now $\phi(s,f(s))$ and $\phi(s',f(s))$ by Lemma~\ref{fds}. Since $f(s)\in_2 y$, $s=s'$. Hence $s\in_1 x'$. We have proved $\forall s(s\in_1 x\to s\in_1 x')$. The converse follows from symmetry.\end{proof}

\begin{lemma}\label{L7}
If $\phi(x,y)$ and $\phi(x',y')$, then $x\in_1x'\leftrightarrow y\in_2y'$.
\end{lemma}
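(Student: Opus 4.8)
The plan is to fix witnesses $\psi(x,y,f)$ and $\psi(x',y',f')$ and then prove the two implications of the biconditional separately, relying entirely on the earlier lemmas rather than on a fresh induction. The formula $\psi(x',y',f')$ already encodes that $f'$ is an $\in_1$-isomorphism of $\TC_1(\{x'\})$ onto $\TC_2(\{y'\})$ sending $x'$ to $y'$, so membership questions about $x'$ transfer, through $f'$, to membership questions about $y'$ via clause (iv); the only thing missing is to identify the value $f'(x)$ with $y$, and this is exactly where the functionality and injectivity of $\phi$ (Lemma~\ref{L5} and Lemma~\ref{L6}) come in.

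For the forward direction, assume $x\in_1 x'$. First I would apply clause (iv) of $\psi(x',y',f')$ with $t=x$ and $w=x'$: since $x\in_1 x'$ gives $x\in_1\TC_1(x')$, and $x'\in_1\TC_1(\{x'\})$, clause (iv) yields $f'(x)\in_2 f'(x')$, and by clause (v) this says $f'(x)\in_2 y'$. It remains to see that $f'(x)=y$. For this I invoke Lemma~\ref{fds}: from $\psi(x',y',f')$ and $x\in_1 x'$ we obtain $\phi(x,f'(x))$, and since we also have $\phi(x,y)$ by hypothesis, the functionality half of Lemma~\ref{L5} forces $y=f'(x)$. Hence $y\in_2 y'$.

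For the backward direction, assume $y\in_2 y'$. Here I would use the second half of Lemma~\ref{fdsy}: from $\psi(x',y',f')$ and $y\in_2 y'$ there is some $x''\in_1 x'$ with $f'(x'')=y$ and $\phi(x'',y)$. Combining $\phi(x'',y)$ with the hypothesis $\phi(x,y)$, the injectivity half of Lemma~\ref{L6} gives $x=x''$, whence $x\in_1 x'$.

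I do not expect a serious obstacle, since the statement is essentially a corollary of the transfer properties already established. The one point requiring care is bookkeeping with the transitive closures: one must check that the witnesses entering clause (iv) genuinely lie in the relevant sets $\TC_1(x')$ and $\TC_1(\{x'\})$, and that the element produced by Lemma~\ref{fdsy} is an $\in_1$-element of $x'$ itself (not merely of its transitive closure), so that the conclusions $y\in_2 y'$ and $x\in_1 x'$ really concern $x'$ and $y'$ rather than their closures. Beyond this the argument is symmetric in the two directions and reduces to a correct matching of variable names when applying Lemmas~\ref{fds}, \ref{L5}, and \ref{L6}.
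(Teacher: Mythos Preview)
Your argument is correct and follows essentially the same route as the paper's proof: fix witnesses $f,f'$, use clause~(iv) (together with~(v)) to get $f'(x)\in_2 y'$, invoke Lemma~\ref{fds} to obtain $\phi(x,f'(x))$, and conclude $y=f'(x)$ via Lemma~\ref{L5}. The paper dispatches the converse with ``similar''; your explicit treatment via the second clause of Lemma~\ref{fdsy} and the injectivity half of Lemma~\ref{L6} is exactly the intended symmetric argument.
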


\begin{proof} 
Suppose $\psi(x,y,f)$ and $\psi(x',y',f')$. Suppose $x\in_1 x'$. Then $z=f'(x)\in_2 y'$. By Lemma~\ref{fds}, $\phi(x,z)$. We have $\phi(x,y)$ and $\phi(x,z)$. By Lemma~\ref{L5}, $y=z$. Hence $y\in_2 y'$. The converse is similar. 
\end{proof}

Let $\On_1(x)$ be the $\in_1$-formula saying that $x$ is an ordinal i.e. a transitive set of transitive sets, and similarly $\On_2(x)$. For $\On_1(\alpha)$ let $V^1_\alpha$ be the $\alpha^{th}$ level of the cumulative hierarchy in the sense of $\in_1$, and similarly $V^2_y$ when $\On_2(y)$.

\begin{lemma}\label{L8}
If $\phi(\alpha,y)$, then $\On_1(\alpha)$ if and only if  $\On_2(y)$. If $\alpha$ is a limit ordinal then so is $y$ i.e. if $\forall u\in_1 \alpha\exists v\in_1\alpha(u\in_1 v)$, then $\forall u\in_2 y\exists v\in_2 y(u\in_2 v)$, and vice versa. 
\end{lemma}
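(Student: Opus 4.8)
The plan is to exploit the fact that, by the previous lemmas, the relation $\phi$ behaves exactly like an isomorphism between the local structures $(\TC_1(\{\alpha\}),\in_1)$ and $(\TC_2(\{y\}),\in_2)$ carrying $\alpha$ to $y$: Lemma~\ref{fds} gives that this correspondence is defined downward and surjective on the respective transitive closures, Lemma~\ref{L5} that it is single-valued and injective, and Lemma~\ref{L7} that it preserves and reflects membership. Since being an ordinal, a ``transitive set of transitive sets'', is a property of the $\in$-structure on $\TC(\{\alpha\})$ with $\alpha$ as a parameter, I expect it to transfer across $\phi$. Concretely, I would first isolate a one-step transfer fact and then bootstrap it to full ordinal-hood.

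The core step I would prove is: if $\phi(a,b)$ and $a$ is $\in_1$-transitive, then $b$ is $\in_2$-transitive. Fix $f$ with $\psi(a,b,f)$ and suppose $s\in_2 t\in_2 b$; I must show $s\in_2 b$. By the second part of Lemma~\ref{fds} applied to $\phi(a,b)$ there is $t'\in_1 a$ with $\phi(t',t)$, and applying it again to $\phi(t',t)$ gives $s'\in_1 t'$ with $\phi(s',s)$. Transitivity of $a$ turns $s'\in_1 t'\in_1 a$ into $s'\in_1 a$. Now the first part of Lemma~\ref{fds} yields $\phi(s',f(s'))$, and Lemma~\ref{L7} applied to $\phi(s',f(s'))$ and $\phi(a,b)$ together with $s'\in_1 a$ gives $f(s')\in_2 b$; finally Lemma~\ref{L5} identifies $s=f(s')$, since both $\phi(s',s)$ and $\phi(s',f(s'))$ hold. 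Hence $s\in_2 b$. With this in hand, assume $\On_1(\alpha)$: transitivity of $\alpha$ transfers to $y$ by the core step, and for each $t\in_2 y$ I pull back to $t'\in_1\alpha$ with $\phi(t',t)$, note that $t'$ is $\in_1$-transitive because $\alpha$ is an ordinal, and apply the core step again to conclude that $t$ is $\in_2$-transitive; thus $\On_2(y)$. The reverse implication follows from the symmetric version of the lemmas with the roles of $\in_1$ and $\in_2$ interchanged.

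For the limit clause I would argue directly from membership-reflection. Given $u\in_2 y$, the second part of Lemma~\ref{fds} produces $u'\in_1\alpha$ with $f(u')=u$ and $\phi(u',u)$; the hypothesis $\forall u\in_1\alpha\exists v\in_1\alpha(u\in_1 v)$ gives $v'\in_1\alpha$ with $u'\in_1 v'$; setting $v=f(v')$, the first part of Lemma~\ref{fds} gives $\phi(v',v)$, and Lemma~\ref{L7} yields both $v\in_2 y$ (from $\phi(v',v)$, $\phi(\alpha,y)$ and $v'\in_1\alpha$) and $u\in_2 v$ (from $\phi(u',u)$, $\phi(v',v)$ and $u'\in_1 v'$). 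So $y$ has no $\in_2$-maximal element, and ``vice versa'' is again symmetry. The main obstacle is not conceptual but bookkeeping: one must keep straight which transitive closure each witness lives in and repeatedly use the functionality of Lemma~\ref{L5} to collapse the two descriptions $s$ and $f(s')$ of the same object, since $\phi$ is presented as a single-valued correspondence rather than literally as the function $f$.
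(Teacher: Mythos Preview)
Your proof is correct and follows essentially the same idea as the paper's: pull back along the correspondence, use $\in_1$-transitivity of $\alpha$, then push forward. The only difference is packaging --- the paper works directly with a single witnessing $f$ from $\psi(\alpha,y,f)$ and appeals to clauses (iii) and (iv), whereas you factor the same steps through Lemmas~\ref{fds}, \ref{L5}, and \ref{L7}; this makes your argument slightly longer (you need Lemma~\ref{L5} to identify $s$ with $f(s')$, a step the paper avoids since it stays with one $f$ throughout) but is otherwise the same proof.
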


\begin{proof} Let us fix $y$.
%We may assume the claimed implication holds for all $\alpha$ and all $\in_2$-elements of $y$. 
Suppose $\psi(\alpha,y,f)$.  We prove that $y$ is a transitive set of transitive sets. Suppose $w\in_2s\in_2y$. There are  $t\in_1 \alpha$ and $u\in_1 t$ such that $f(t)=s$ and $f(u)=w$. Now $w\in_2 y$ follows from $u\in_1\alpha$. This shows that $\tr_2(y)$. Similarly one proves that all $s\in_2y$ satisfy $\tr_2(y)$. This ends the proof of the first claim. The second claim is proved similarly.
\end{proof}

\begin{lemma}\label{L9}
Suppose $\psi(\alpha,y,f)$. If  $\On_1(\alpha)$ (or equivalently $\On_2(y)$), then there is $\bar{f}\supseteq f$ such that  $\psi(V^1_\alpha,V^2_y,\bar{f})$. 
\end{lemma}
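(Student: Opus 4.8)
The plan is to prove, by $\in_1$-induction on the $\in_1$-ordinal $\beta\le_1\alpha$, the auxiliary statement $\Phi(\beta)$: \emph{$\phi(V^1_\beta,V^2_{f(\beta)})$ holds}. Taking $\beta=\alpha$ then yields a witness $\bar f$ of $\psi(V^1_\alpha,V^2_y,\bar f)$, and I will check separately that this witness extends $f$. Throughout I will use that $f\restriction\alpha$ is an order isomorphism of the $\in_1$-ordinals $<_1\alpha$ onto the $\in_2$-ordinals $<_2 y$ (from Lemma~\ref{L7} and Lemma~\ref{L8}), so that in particular $f$ commutes with the successor and supremum operations on ordinals; in this way the level index $\beta$ silently carries the rank bookkeeping.

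For the base case $\beta=0$ the empty $\in_1$-function witnesses $\phi(\emptyset_1,\emptyset_2)$, and $f(0)=0$. For the limit case I take the union of the (unique, by the first lemma) witnesses $g_\gamma$ of $\phi(V^1_\gamma,V^2_{f(\gamma)})$ for $\gamma<_1\beta$; these cohere by uniqueness, their union together with the pair $(V^1_\beta,V^2_{f(\beta)})$ is an $\in_1$-function assembled by $\in_1$-Replacement, and it witnesses $\phi(V^1_\beta,V^2_{f(\beta)})$ because $f(\beta)$ is again a limit ordinal by Lemma~\ref{L8} and $V^2_{f(\beta)}=\bigcup_{\delta<_2 f(\beta)}V^2_\delta$.

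The heart of the argument is the successor step $\beta=\gamma+1$. Assuming $\phi(V^1_\gamma,V^2_{f(\gamma)})$ with witness $g$, I must produce a membership-preserving bijection of $V^1_{\gamma+1}=\mathcal{P}_1(V^1_\gamma)$ (the $\in_1$-power set) onto $V^2_{f(\gamma)+1}=\mathcal{P}_2(V^2_{f(\gamma)})$, using $f(\gamma+1)=f(\gamma)+1$. Given an $\in_1$-subset $a$ of $V^1_\gamma$, its intended image is the $\in_2$-set
\[
\{\, z\in_2 V^2_{f(\gamma)} : \exists t\,(t\in_1 a\wedge g(t)=z)\,\},
\]
which exists by the $\in_2$-Separation schema applied to the $\in_2$-set $V^2_{f(\gamma)}$. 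This is the crucial point where the \emph{joint} vocabulary is indispensable: the defining condition mentions $\in_1$ (through $a$ and $g$), and only because $ZFC(\in_2)$ includes Separation for formulas containing $\in_1$ is this an $\in_2$-set. Symmetrically, every $\in_2$-subset $b$ of $V^2_{f(\gamma)}$ has the $\in_1$-preimage $\{t\in_1 V^1_\gamma : g(t)\in_2 b\}$ by the $\in_1$-Separation schema, giving surjectivity; and membership is preserved since any $\in_1$-member of an element of $V^1_{\gamma+1}$ already lies in $V^1_\gamma$, reducing the relevant instances to the membership-preservation of $g$ together with the fact that all new images lie in $V^2_{f(\gamma)+1}$. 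Collecting the resulting pairs by $\in_1$-Replacement (again with a joint-vocabulary formula) and adjoining $(V^1_{\gamma+1},V^2_{f(\gamma+1)})$ produces the witness for $\Phi(\gamma+1)$; verifying clauses (i)--(v) of $\psi$ is then routine, with (iv) following from Lemma~\ref{L7}.

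Applying $\Phi(\alpha)$ finally gives a witness $\bar f$ of $\psi(V^1_\alpha,V^2_y,\bar f)$. That $\bar f$ agrees with $f$ on the $\in_1$-ordinals $<_1\alpha$ is immediate: for such $\delta$ we have $\delta\in_1 V^1_\alpha$, so $\phi(\delta,\bar f(\delta))$ by Lemma~\ref{fds}, while also $\phi(\delta,f(\delta))$, whence $\bar f(\delta)=f(\delta)$ by Lemma~\ref{L5}; thus $\bar f\supseteq f\restriction\alpha$, and the top point is handled correctly since $\bar f(V^1_\alpha)=V^2_y=f(\alpha)$. I expect the successor step to be the main obstacle, concretely the verification that the candidate image-set lands at the correct level $V^2_{f(\gamma)+1}$ and that the assembled object genuinely satisfies all five clauses of $\psi$; everything there hinges on the cross-vocabulary Separation and Replacement schemas, which is exactly the feature that makes the theorem go through.
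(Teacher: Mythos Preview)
Your argument is correct and follows essentially the same route as the paper's: induction on the ordinal level, with the successor step using $\in_2$-Separation (in the joint vocabulary) to form the image of each $\in_1$-subset of $V^1_\gamma$ and $\in_1$-Replacement to assemble the extended function, and the limit step handled by taking the union of the unique earlier witnesses. One small slip in your last paragraph: $f(\alpha)=y$, not $V^2_y$; and since $\alpha\notin V^1_\alpha$ the literal inclusion $\bar f\supseteq f$ actually fails at the single point $\alpha$---an imprecision the paper's own statement shares and which is irrelevant to the later applications of the lemma.
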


\begin{proof}
We use induction on $\alpha$. Suppose the claim holds for $\alpha$. We prove the claim for $\alpha+1$. Suppose to this end  $\psi(\alpha+1,y+1,f)$. We construct $\bar{f}$ such that $\psi(V^1_{\alpha+1},V^2_{y+1},\bar{f})$. From $\psi(\alpha+1,y+1,f)$ we obtain $\psi(\alpha,y,f\restriction \alpha)$. By assumption there is $g\supseteq f\restriction\alpha$ such that $\psi(V^1_\alpha,V^2_y,g)$.  Let $\theta(u,v)$ be the formula
$$\forall w(w\in_2 v\leftrightarrow (w\in_2 V^2_y\wedge\exists t\in_1 u(g(t)=w))).$$
It follows from the Separation Schema of $ZFC(\in_2)$ that for all $u\in_1V^1_{\alpha+1}$ there is $v$ such that $\theta(u,v)$. By the Replacement Schema of $ZFC(\in_1)$, we can let $\bar{f}$ be a function (in the sense of $\{\in_1\}$) such that for all $u\in_1V^1_{\alpha+1}$ we have  $\theta(u,\bar{f}(u))$. It is easy to see, using the Separation Schema of $ZFC(\in_1)$, that $\psi(V^1_{\alpha+1},V^2_{y+1},\bar{f})$.

Suppose then the claim holds for all $\beta<\alpha=\bigcup\alpha$. For each $\beta<\alpha$ there is thus some $g_\beta$ such that $\psi(V^1_\beta,V^2_{f(\beta)},g_\beta)$.  By the Replacement Schema of $ZFC(\in_1)$ we can form the $\in_1$-set  $\bar{f}=\bigcup_{\beta<\alpha}g_\beta$. It is easy to see that $\psi(V^1_\alpha,V^2_y,\bar{f})$. 
\end{proof}

\begin{lemma}\label{L10}
$\forall x\exists y\phi(x,y)$ and $\forall y\exists x\phi(x,y)$. 
\end{lemma}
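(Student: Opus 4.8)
The plan is to prove the two halves symmetrically, so I will describe the argument for $\forall x\exists y\,\phi(x,y)$; the other half follows by interchanging the roles of $\in_1$ and $\in_2$ throughout. The key reduction is that it suffices to treat ordinals: once I know that every $\in_1$-ordinal has a $\phi$-image, I can handle an arbitrary $x$ by choosing (using that every $\in_1$-set has an $\in_1$-rank) an $\in_1$-ordinal $\alpha$ with $x\in_1 V^1_\alpha$, taking $y,f$ with $\psi(\alpha,y,f)$, extending through Lemma~\ref{L9} to $\bar f$ with $\psi(V^1_\alpha,V^2_y,\bar f)$, and finally invoking Lemma~\ref{fds} (with $x\in_1 V^1_\alpha$) to conclude $\phi(x,\bar f(x))$.

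So the heart of the matter is the claim that every $\in_1$-ordinal $\alpha$ satisfies $\exists y\,\phi(\alpha,y)$, which I would prove by $\in_1$-induction. Suppose $\alpha$ is the $\in_1$-least counterexample. By Lemma~\ref{L5} each $\beta\in_1\alpha$ has a unique image $h(\beta)$, and by Lemmas~\ref{L7} and~\ref{L8} the class function $h$ is an $\in$-order isomorphism of $\alpha$ onto the downward-closed class $I=\{y:\exists\beta\in_1\alpha\,\phi(\beta,y)\}$ of $\in_2$-ordinals. The cases $\alpha=0$ and $\alpha$ a successor are disposed of by literally adjoining one pair to $h$ (the image being $0$, respectively the $\in_2$-successor of $h(\beta)$), so $\alpha$ must be a limit. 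Now $I$, being a downward-closed class of $\in_2$-ordinals, is either an $\in_2$-set or all of $\On_2$. If $I$ is bounded in $\On_2$, then by the Separation Schema of $ZFC(\in_2)$ (applied inside a bounding $\in_2$-ordinal, exactly as in Lemma~\ref{L9}) $I$ is an $\in_2$-ordinal $\delta$; setting $f=(h\restriction\alpha)\cup\{(\alpha,\delta)\}$, which is an $\in_1$-set by the Replacement Schema of $ZFC(\in_1)$, one checks (i)--(v) directly to obtain $\psi(\alpha,\delta,f)$, contradicting the choice of $\alpha$.

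The main obstacle is the remaining possibility that $I=\On_2$, i.e. that the images $h(\beta)$ are cofinal in the $\in_2$-ordinals while $\alpha$ remains a fixed $\in_1$-set. I would rule this out as follows. For each $\beta\in_1\alpha$, Lemma~\ref{L9} yields $\bar g_\beta$ with $\psi(V^1_\beta,V^2_{h(\beta)},\bar g_\beta)$, and by the uniqueness of the witnessing function (our first lemma) these maps cohere, so $\bar h=\bigcup_{\beta\in_1\alpha}\bar g_\beta$ is, by the Replacement Schema of $ZFC(\in_1)$, an $\in_1$-set. Its $\in_1$-domain is $V^1_\alpha$ and, because the $V^2_{h(\beta)}$ exhaust the entire $\in_2$-universe once the $h(\beta)$ are cofinal, its $\in_1$-range is the whole domain $M$. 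Hence $M=\{\bar h(u):u\in_1 V^1_\alpha\}$ is an $\in_1$-set by Replacement, and so is an $\in_1$-element of itself, contradicting $\in_1$-Foundation.

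This closes the limit case and completes the induction, establishing $\forall x\exists y\,\phi(x,y)$ after the reduction in the first paragraph. The converse $\forall y\exists x\,\phi(x,y)$ is proved identically with the subscripts $1$ and $2$ exchanged, the symmetry of the formula $\phi$ (reflected in Lemmas~\ref{L5}--\ref{L9}) making the two arguments mirror images of one another. I expect the cofinal case $I=\On_2$ to be the only genuinely delicate point, precisely because it is where one must convert the failure of a set-bound into an outright contradiction rather than into an explicit construction of the image set.
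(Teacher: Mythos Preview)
Your reduction to ordinals and the inductive proof of $(\ref{eq1})$, i.e.\ $\forall\alpha(\On_1(\alpha)\to\exists y\,\phi(\alpha,y))$, are correct; in particular your treatment of the cofinal case $I=\On_2$ (forming the $\in_1$-set $\bar h=\bigcup_{\beta\in_1\alpha}\bar g_\beta$ via Replacement in $ZFC(\in_1)$ and observing that its $\in_1$-range is all of $M$) is a clean argument. This already differs from the paper: the paper does \emph{not} establish $(\ref{eq1})$ and $(\ref{eq2})$ independently, but runs a joint case analysis on which of them fails. When both fail, the two least counterexamples are shown to match; when exactly one fails (say $(\ref{eq2})$), the paper uses Replacement in $ZFC(\in_2)$ to collect all preimages of $V^2_y$ into an $\in_2$-set $c$, then uses the \emph{already assumed} $(\ref{eq1})$ to locate $c$ in the picture and derive $c\in_2 c$. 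Your cofinal-case argument replaces this with a self-contained contradiction that does not appeal to the other direction.

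The gap is in the appeal to symmetry for $\forall y\exists x\,\phi(x,y)$. The formula $\phi$ is \emph{not} symmetric in $\in_1,\in_2$: the witness $f$ in $\psi(x,y,f)$ is required to be a function in the sense of $\in_1$, with $\in_1$-domain $\TC_1(\{x\})$. Literally exchanging the subscripts proves $\forall y\exists x\,\phi^{*}(y,x)$ for the swapped formula $\phi^{*}$ (whose witness is an $\in_2$-function), not the statement you need. Concretely, if you try to run your cofinal-case argument for the second half directly, the index set is $y$ (an $\in_2$-set), so Replacement in $ZFC(\in_2)$ yields an $\in_2$-set $\{\bar g_t:t\in_2 y\}$; but each $\bar g_t$ is an $\in_1$-function, and taking an $\in_2$-union of these does not recover the coherent $\in_1$-function you want, nor does it obviously produce a universal set in either sense. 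There are two straightforward repairs: either prove once and for all that $\phi^{*}(y,x)\leftrightarrow\phi(x,y)$ (convert an $\in_2$-witness into an $\in_1$-witness by Replacement in $ZFC(\in_1)$ over the $\in_1$-set $\TC_1(\{x\})$), after which your symmetric proof goes through; or, having already established $(\ref{eq1})$, treat the second half as the paper does in its Case~2, using $(\ref{eq1})$ explicitly to place the Replacement-collected set $c$ and obtain $c\in_2 c$.
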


\begin{proof}
Let us first assume that both 
\begin{equation}\label{eq1}
\forall \alpha(\On_1(\alpha)\to\exists y\phi(\alpha,y))
\end{equation} and 
\begin{equation}\label{eq2}
\forall y(\On_2(y)\to\exists \alpha\phi(\alpha,y)).
\end{equation} hold.
In order to prove $\forall x\exists y\phi(x,y)$, suppose $x$ is given. There is $\alpha$ such that $\On_1(\alpha)$ and $x\in_1 V^1_\alpha$. By (\ref{eq1}) there are $v$ and $f$ such that $\psi(\alpha,v,f)$. By Lemma~\ref{L9} there is $\bar{f}\supseteq f$ such that $\psi(V^1_\alpha, V^2_v,\bar{f})$.  By Lemma~\ref{fds}, $\phi(x,\bar{f}(x))$. Thus $\exists y\phi(x,y)$. 

In order to prove $\forall y\exists x\phi(x,y)$, suppose $y$ is given. There is $v$ such that $\On_2(v)$ and $y\in_2 V^2_v$. By (\ref{eq2}) there are $\alpha$ and $f$ such that $\psi(\alpha,v,f)$. By Lemma~\ref{L9} there is $\bar{f}\supseteq f$ such that $\psi(V^1_\alpha, V^2_v)$.  By condition (iii) of the definition of $\psi$ there is $w\in_1 V^1_\alpha$ such that $\bar{f}(w)=y$. By Lemma~\ref{fds}, $\phi(w,\bar{f}(w))$. Thus $\exists x\phi(x,y)$.  

Thus it suffices to show that the failure of (\ref{eq1}) or (\ref{eq2}) to hold leads to a contradiction.

\medskip

\noindent{\bf Case 1:} $\neg$(\ref{eq1})$\wedge\neg$(\ref{eq2}). Let $\alpha$ be the $\in_1$-least $\alpha$ such that $\On_1(\alpha)\wedge\neg\exists y\phi(\alpha,y)$.
Let $y$ be the $\in_2$-least $y$ such that $\On_2(y)\wedge\neg\exists \beta\phi(\beta,y)$. It is easy to see that $\phi(\alpha,y)$, a contradiction.

\medskip

\noindent{\bf Case 2:} (\ref{eq1})$\wedge\neg$(\ref{eq2}).  
Let $y$ be the $\in_2$-least $y$ such that $\On_2(y)\wedge\neg\exists \alpha\phi(\alpha,y)$. Now, $\forall t\in_2 y\exists \alpha(\On_1(\alpha)\wedge\phi(\alpha,t))$. Clearly, $y$ is an $\in_2$-limit ordinal. Suppose $z\in_2 V^2_t$, where $t\in_2 y$. Let $\alpha$ and $f$ be such that $\On_1(\alpha)\wedge\psi(\alpha,t,f)$.  By Lemma~\ref{L9} there is $\bar{f}\supseteq f$ such that $\psi(V^1_\alpha, V^2_t,\bar{f})$. There is $x\in_1 V^1_\alpha$ such that $\bar{f}(x)=z$. Thus $\phi(x,z)$ and hence
$$\forall z\in_2 V^2_y\exists x\ \phi(x,z).$$ By the Replacement Schema in $ZFC(\in_2)$ there is $c$ such that  
\begin{equation}\label{a}
\forall z\in_2 V^2_y\exists x\in_2 c\ \phi(x,z).
\end{equation}
Let $\alpha$ be such that $c\in_1 V^1_\alpha$. By (\ref{eq1}) there are $t$ and $f$ such that $\phi(\alpha,t,f)$.  Necessarily, $t\in_2 y$. By Lemma~\ref{L9} there is $\bar{f}\supseteq f$ such that $\psi(V^1_\alpha, V^2_t,\bar{f})$. In particular, $\bar{f}(c)\in_2V^2_y$. By (\ref{a}) there is $b\in_2 c$ such that $\phi(b,\bar{f}(c))$. Since also $\phi(c,\bar{f}(c))$, Lemma~\ref{L6} gives $c=b$. Thus $c\in_2 c$, a contradiction.
\medskip

\noindent{\bf Case 3:} $\neg$(\ref{eq1})$\wedge$(\ref{eq2}). This case is analogous to Case 2.

\medskip

\end{proof}

\begin{proposition}\label{main}
The class defined by $\phi(x,y)$ is an isomorphism between  the $\in_1$-reduct and the $\in_2$-reduct.
\end{proposition}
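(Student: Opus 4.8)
The plan is to read off each defining property of an isomorphism from the lemmas already proved, applied to the class $F=\{(x,y):\phi(x,y)\}$. First I would establish that $F$ is a total single-valued function from the $\in_1$-universe to the $\in_2$-universe: totality is the first half of Lemma~\ref{L10} ($\forall x\exists y\,\phi(x,y)$), and single-valuedness is the first clause of Lemma~\ref{L5} (if $\phi(x,y)$ and $\phi(x,y')$ then $y=y'$). Writing $y=F(x)$ for the unique $y$ with $\phi(x,y)$, I would then show that $F$ is a bijection: surjectivity is the second half of Lemma~\ref{L10} ($\forall y\exists x\,\phi(x,y)$), while injectivity is the second clause of Lemma~\ref{L6} (if $\phi(x,y)$ and $\phi(x',y)$ then $x=x'$).

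It remains to verify that $F$ preserves and reflects membership, and this is precisely Lemma~\ref{L7}: for any $x,x'$ we have $x\in_1 x'\leftrightarrow F(x)\in_2 F(x')$. Combining bijectivity with this two-way membership preservation, $F$ is an isomorphism from $(M,\in_1)$ onto $(M,\in_2)$. Finally, since the graph of $F$ is defined by the single formula $\phi(x,y)$ of the vocabulary $\{\in_1,\in_2\}$, and all of the above are theorems of $ZFC(\in_1)\cup ZFC(\in_2)$, the isomorphism is a definable class function, which also yields Theorem~\ref{maintheorem}.

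I do not expect a genuine obstacle in this final step: all the mathematical content has been discharged in the preceding lemmas, so the proof of the Proposition is essentially bookkeeping. The only place I would pause is the appeal to Lemma~\ref{L10}, since totality and surjectivity are where the real difficulty lives; they rest on the cumulative-hierarchy extension of Lemma~\ref{L9} (which uses Separation in $ZFC(\in_2)$ and Replacement in $ZFC(\in_1)$) together with the minimal-counterexample arguments of its three cases. Single-valuedness, injectivity, and membership preservation are then immediate from Lemmas~\ref{L5}, \ref{L6}, and \ref{L7}.
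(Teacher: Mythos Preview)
Your proposal is correct and matches the paper's own proof exactly: the paper simply cites Lemmas~\ref{L5}, \ref{L7}, and \ref{L10}, which is precisely your decomposition into single-valuedness/injectivity, membership preservation, and totality/surjectivity (note that \ref{L5} and \ref{L6} label the same lemma). Your additional remarks about definability and the reliance on Lemma~\ref{L9} within Lemma~\ref{L10} are accurate commentary but not needed for the Proposition itself.
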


\begin{proof}
By Lemmas~\ref{L5}, \ref{L7} and \ref{L10}.
\end{proof}

A similar result holds for first order Peano arithmetic, extending the categoricity result of Dedekind \cite{02693454} of  second order Peano arithmetic. The proof (see \cite{mapping}) of this is similar, but somewhat easier.

%\bibliographystyle{plain}

%\bibliography{solost}

\begin{thebibliography}{1}

\bibitem{02693454}
Richard Dedekind.
\newblock {\em Was sind und was sollen die Zahlen?}
\newblock Braunschweig. Vieweg $\and$ Sohn, 1888.

\bibitem{Martin}
Donald Martin.
\newblock {Completeness or incompleteness of basic mathematical concepts
  (draft).}
\newblock {http://www.math.ucla.edu/~dam/booketc/efi.pdf}, 2018.

\bibitem{Vaananen2012-VNNSOL}
Jouko V\"a\"an\"anen.
\newblock Second order logic or set theory?
\newblock {\em Bulletin of Symbolic Logic}, 18(1):91--121, 2012.

\bibitem{mapping}
Jouko V\"a\"an\"anen.
\newblock {Tracing Internal Categoricity.}
\newblock {Proceedings of the conference ``Mapping Traces", Bogot\'a, 2014. To
  appear}, 2018.

\bibitem{welch}
Philip Welch.
\newblock Global reflection principles.
\newblock In {\em Logic, Methodology and Philosophy of Science - Proceedings of
  the 15th International Congress, 2015}, volume~97. College Publications,
  2017.

\bibitem{zbMATH02562682}
Ernst {Zermelo}.
\newblock {\"Uber Grenzzahlen und Mengenbereiche. Neue Untersuchungen \"uber
  die Grundlagen der Mengenlehre.}
\newblock {\em {Fundam. Math.}}, 16:29--47, 1930.

\end{thebibliography}

\end{document}